\newcommand*{\rom}[1]{\expandafter\@slowromancap\romannumeral #1@}
\renewcommand{\a}{\alpha}
\renewcommand{\l}{\lambda}
\renewcommand{\r}{\rho}
\newcommand{\Ac}{{\mathcal A}}
\newcommand{\Lc}{{\mathcal L}}
\newcommand{\Oc}{{\mathcal O}}
\newcommand{\Pc}{{\mathcal P}}
\newcommand{\Hc}{{\mathcal H}}
\newcommand{\C}{{\mathbb C}}
\newcommand{\N}{{\mathbb N}}
\newcommand{\R}{{\mathbb R}}
\newcommand{\Z}{{\mathbb Z}}
\newcommand{\Q}{{\mathbb Q}}
\def\({\left(}
\def\){\right)}
\def\l\{{\left\{}
\def\r\}{\right\}}
\def\wt{\widetilde}
\def\w{\widehat}
\def\wb{\overline}
\def\id{{\rm id}}
\def\tr{{\rm tr}}
\def\ad{{\rm ad}}
\def\Ad{{\rm Ad}}
\def\Ker{{\rm Ker\,}}
\def\Leb{{\rm Leb}}
\def\ev{{\bf ev}}
\def\diam{{\rm diam\,}}
\def\Pb{{\bf P}}
\def\PSL{{\rm PSL}}
\def\GL{{\rm GL}}
\def\SL{{\rm SL}}
\newtheorem{theorem}{Theorem}[section]
\newtheorem{proposition}[theorem]{Proposition}
\newtheorem{lemma}[theorem]{Lemma}
\newtheorem{conjecture}[theorem]{Conjecture}
\theoremstyle{definition}
\newtheorem{definition}[theorem]{Definition}
\theoremstyle{remark}
\newtheorem{remark}[theorem]{Remark}
\begin{document}

\title[Asymptotic statistics for continued fractions]{Asymptotic statistics for finite continued fractions with restricted digits}

\author{Jungwon Lee}
\address{Max Planck Institute for Mathematics, Vivatsgasse 7, 53111 Bonn, Germany}
\email{jungwon@mpim-bonn.mpg.de}

\date{\today}

\begin{abstract}
Zaremba's conjecture concerns a formation of continued fraction expansions for rational numbers with partial quotient bounded by an absolute constant. We present asymptotic estimates for the size of $\epsilon$-thickening of certain fractal sets of bounded-type, which in turn provide a remark on Zaremba's conjecture in an averaging sense. We also discuss a generalisation for complex continued fractions over imaginary quadratic fields.
\end{abstract}

\maketitle
\setcounter{tocdepth}{1}
\tableofcontents

\section{Introduction}

For $x \in [0,1]$, we denote by
\begin{equation} \label{exp:cf}
x=[0; a_1, a_2, \ldots]=\frac{1}{a_1 +\frac{1}{a_2 +\cdots}}  
\end{equation}
its continued fraction expansion. The integers $a_j \geq 1$ are called the digit or partial quotient of $x$. We remark that the expansion terminates uniquely in a finite time $\ell=\ell(x)$ with $a_\ell \geq 2$ if and only if $x$ is rational.

One of the famous conjectures in Diophantine approximation, predicted by Zaremba in 1971, concerns  analytic structures of finite continued fraction expansions with restricted digits and simply stated as follows.

\begin{conjecture}[Zaremba] \label{conj:Z}
Let $N \in \N$. Then there exists $a \in (\Z/N\Z)^\times$ such that 
\[ \frac{a}{N}=[0; a_1, a_2, \ldots, a_\ell] \]
has all $a_j \leq A$ for some absolute $A \geq 2$.
\end{conjecture}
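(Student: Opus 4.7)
The stated claim is Zaremba's conjecture, which remains open in general; any honest proposal can only describe a strategy and flag where current technology fails. The approach I would adopt is the Bourgain--Kontorovich circle method, aimed at every modulus rather than a density-one subset. Write
\[ R_A(N) = \#\l\{ a \in (\Z/N\Z)^\times : a/N = [0;a_1,\ldots,a_\ell],\ 1 \leq a_j \leq A \r\}, \]
so that Conjecture~\ref{conj:Z} is equivalent to $R_A(N) \geq 1$ for some absolute $A \geq 2$ and every $N$. The plan is to obtain a quantitative lower bound of shape $R_A(N) \gg_A N^{2\d_A - 1 - o(1)}$, where $\d_A$ is the Hausdorff dimension of the bounded-type Cantor set $E_A \subset [0,1]$; since $\d_A \to 1$ as $A \to \infty$, such a bound forces $R_A(N) > 0$ once $A$ is taken large enough, which is all the conjecture demands.

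Step one is to parametrise bounded-type rationals by products of matrices $\twobytwo{0}{1}{1}{a_j}$ with $1 \leq a_j \leq A$, producing a free semigroup $\G_A \subset \SL_2(\Z)$. Counting $R_A(N)$ with a smooth window at scale $N$ reduces to controlling exponential sums $\sum_{\g \in \G_A} \vp(\g)\, e_N(\b\cdot\g)$ for $\b \in (\Z/N\Z)^2$, where $\vp$ is a suitable smooth cut-off. The transfer operator $\Lc_s f(x) = \sum_{a=1}^{A} (a+x)^{-2s} f(1/(a+x))$ acting on holomorphic functions on a disc around $[0,1]$ governs these sums through its leading eigenvalue and spectral gap at $s=\d_A$; the $\epsilon$-thickening estimates of the paper supply the main-term input.

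Step two is the major/minor arc split in the $\b$-variable. On major arcs one twists $\Lc_s$ by a character of $\SL_2(\Z/N\Z)$ and uses its spectral gap to extract the main term of size $N^{2\d_A - 1}$. On minor arcs one needs a power saving $|S_A(\b;N)| \ll N^{2\d_A - \eta}$ uniform in $N$, which I would attempt via a bilinear decomposition of $\G_A$ combined with Bourgain--Gamburd style expansion in $\SL_2(\Z/N\Z)$.

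The main obstacle, and the reason Zaremba's conjecture is still open, lies entirely in the minor-arc estimate: it is known only with a uniformity that degrades for $N$ with many small prime factors, leaving an exceptional set the circle method cannot presently remove. Closing this last gap would require (i) an effective, $N$-uniform spectral gap for the congruence action of $\G_A$ on $\SL_2(\Z/N\Z)$, with constants independent of the multiplicative structure of $N$, and (ii) a separate descent argument for moduli with large square divisors. Both are themselves substantial open problems; honesty demands reporting that the strategy as it stands delivers only the averaged/density-one result that the paper supports, and that Conjecture~\ref{conj:Z} in its full strength must be marked as open.
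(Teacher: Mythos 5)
This statement is Conjecture~\ref{conj:Z} itself, which is an open problem; the paper offers no proof of it and only provides averaged evidence via the $\epsilon$-thickened sets of Theorems~\ref{main:overQ} and~\ref{main:overK}. Your proposal handles this correctly: you do not pretend to prove the conjecture, you accurately describe the Bourgain--Kontorovich circle-method strategy, and you pinpoint the genuine obstruction (the $N$-uniform minor-arc/expansion estimate, which currently yields only a density-one set of moduli), which is consistent with how the paper itself treats the conjecture. The only caveat worth flagging is your suggestion that the paper's thickening estimates ``supply the main-term input'' to the circle method: the paper's asymptotics for $\Sigma_{N,A}(\epsilon)$ and $\Omega_{N,A}$ are independent averaged statistics obtained by transfer-operator and Tauberian methods, not ingredients of the Bourgain--Kontorovich major-arc analysis, so they support the conjecture on average but do not feed into the counting argument you sketch.
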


A substantial progress towards Zaremba's conjecture is given by Bourgain--Kontorovich \cite{bour:kon} that the conjecture holds for $A=50$ and for almost every $N \in \N$ in the sense of density. Despite the completely elementary statement, their proof requires surprisingly heavy machinery from the diverse contexts, in particular application of circle method to count thin group orbits.

To better introduce our result and approach, we discuss a reformulation of Zaremba's conjecture in terms of the Hausdorff dimension $\delta_A$ of a fractal set of real numbers whose digits are at most $A$:
\begin{equation} \label{def:bdd}
E_A=\{ x=[0;a_1,a_2,\ldots] \in [0,1]: a_j \leq A \}.
\end{equation}
Setting 
\[ D_A=\left\{ N \in \N: \frac{a}{N} \in E_A \cap \Q \ \mbox{for some $a\in (\Z/N\Z)^\times$}  \right\}, \]
Conjecture \ref{conj:Z} now reads that $D_A =\N$ for some absolute constant $A \geq 2$. Indeed, Zaremba suggested a sufficient value for $A=5$. 

Further, Hensley claimed a deeper connection between the Hausdorff dimension of $E_A$ and Zaremba's conjecture as follows.

\begin{conjecture}[{\cite[Conjecture 3]{hensley}}] \label{conj:H}
We have $\delta_A>1/2$ if and only if $D_A \supset \N_{\gg 1}$, i.e. Conjecture~\ref{conj:Z}  takes place for all sufficiently large $N$.
\end{conjecture}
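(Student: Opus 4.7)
The plan is to treat the two implications separately.

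For $D_A \supset \N_{\gg 1} \Rightarrow \delta_A > 1/2$, I would argue by contrapositive. Assuming $\delta_A \leq 1/2$, the goal is $|D_A \cap [1,X]| = o(X)$, which contradicts the hypothesis. The key lemma is
\[ \#\{a/q \in E_A \cap \Q : q \leq X\} \ll X^{2\delta_A + o(1)}, \]
which follows from covering $E_A$ by $\ll X^{2\delta_A + o(1)}$ intervals of length $X^{-2}$ and using the $X^{-2}$-separation of distinct Farey fractions with denominators at most $X$. Since each $N \in D_A$ contributes at least one such rational, one obtains $|D_A \cap [1,X]| = o(X)$ whenever $\delta_A < 1/2$. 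The borderline case $\delta_A = 1/2$ should be accessible by a logarithmic refinement of the covering estimate, presumably via thermodynamic formalism applied to the restricted Gauss map.

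For $\delta_A > 1/2 \Rightarrow D_A \supset \N_{\gg 1}$, the plan is to follow the Hardy-Littlewood circle method pioneered by Bourgain--Kontorovich \cite{bour:kon}. Let $\Gc_A \subset \SL_2(\Z)$ denote the semigroup generated by the matrices $\twobytwo{0}{1}{1}{k}$ for $1 \leq k \leq A$; its $(2,2)$-entries realise exactly the denominators in $D_A$. Fix a large $N$ and form the representation function $\Rc_N(T) = \#\{\g \in \Gc_A : \|\g\| \leq T,\ \g_{2,2} = N\}$; the target is $\Rc_N(T) > 0$ for $T$ a small power of $N$. Detect the condition $\g_{2,2} = N$ by additive characters, split the resulting exponential sum into major and minor arcs, and extract a main term of size $\asymp T^{2\delta_A}/N$ on the major arcs using the spectral data of the transfer operator $\Lc_s$ at $s = \delta_A$. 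The assumption $\delta_A > 1/2$ is precisely what makes this main term exceed the expected minor-arc error.

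The principal obstacle is the minor-arc analysis. Bourgain--Kontorovich achieve only a positive-density version of $D_A = \N$ for $A = 50$, using expansion for $\Gc_A$ in $\SL_2(\Z/q\Z)$ of Bourgain--Gamburd--Sarnak type; upgrading this to $D_A \supset \N_{\gg 1}$ would require a pointwise, rather than second-moment, minor-arc bound, which in turn demands stronger uniform spectral gap estimates than are currently known. This is precisely why both Zaremba's conjecture and the forward direction of Hensley's equivalence remain open in full generality. A natural intermediate target, consistent with the $\ve$-thickening asymptotics developed in the sequel, is to replace the pointwise representation property by an averaged count over $N$ (or over congruence moduli), where the required spectral input reduces to an $L^2$-bound and becomes tractable.
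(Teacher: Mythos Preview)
The statement you address is Conjecture~\ref{conj:H}, which the paper records as an \emph{open conjecture} of Hensley and does not attempt to prove; there is therefore no proof in the paper for your proposal to be compared against. The paper's actual contribution is the averaged asymptotic of Theorems~\ref{main:overQ} and~\ref{main:overK}, offered as evidence toward, not a resolution of, the conjectural picture.

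Your write-up is itself a strategy sketch rather than a proof, and you correctly identify the decisive obstruction: the forward direction $\delta_A>1/2 \Rightarrow D_A\supset\N_{\gg 1}$ would require a pointwise minor-arc bound in the Bourgain--Kontorovich circle method, whereas only $L^2$-type bounds are presently available, yielding density one rather than cofinality. This is not a gap to be patched but the reason the conjecture remains open. For the reverse direction, your covering argument gives $|D_A\cap[1,X]|\ll X^{2\delta_A+o(1)}$, which rules out cofinality when $\delta_A<1/2$; however, at the borderline $\delta_A=1/2$ the bound is only $X^{1+o(1)}$, and the ``logarithmic refinement'' you invoke is not evidently sufficient---indeed Hensley's own asymptotic $|\Omega_{N,A}|\asymp N^{2\delta_A}$ is of order $N$ when $\delta_A=1/2$, which is a priori compatible with $D_A$ being cofinite. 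So neither implication is established by your outline.
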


In this regard, computing $\delta_A$ has been a crucial theme in the literature, in particular rigorous numerical bounds can be found in Jenkinson--Pollicott \cite{jen:pol, jen:pol2}. Meanwhile, \cite{hensley} considered an extra averaging condition on $D_A$ in Conjecture \ref{conj:H} by setting 
\begin{align} \label{def:Zset:av}
\Sigma_{N,A}&:=\left\{ \frac{a}{N} \in E_A : 1 \leq a < N, (a,N)=1 \right\}  \\
\Omega_{N,A}&:=\left\{ \frac{a}{n} \in E_A : 1 \leq a <n \leq N, (a,n)=1 \right\} \label{def:Zset:av2}
\end{align}
and observed $|\Omega_{N,A}| \sim N^{2\delta_A}$, where the notation `$\sim$' represents the asymptotic limit here and throughout, that is if $f,g$ are real valued functions then $f(x)/g(x) \rightarrow 1$ as $x \rightarrow \infty$. This suggests that we have the following strong form of Zaremba's conjecture.

\begin{conjecture} \label{conj:Z:revisit}
Let $N \in \N$. Then
\[ |\Sigma_{N,A}| \sim \frac{N^{2\delta_A}}{\varphi(N)} \]
where $\varphi$ denotes the Euler phi function.
\end{conjecture}

This shows that, the study towards Conjecture \ref{conj:Z} is closely related to reducing the exponent of $N$ in the asymptotic formulae for the size of \eqref{def:Zset:av} and \eqref{def:Zset:av2}. In particular when $N$ is prime, Conjecture \ref{conj:Z:revisit} reads that $|\Sigma_{N,A}| \sim N^{2\delta_A-1}$ and there are some bounds known due to Moshchevitin--Murphy--Shkredov \cite{Shk19}: For any $\epsilon>0$, there is $A=A(\epsilon)>0$ such that 
\[|\Sigma_{N,A}| \ll N^{2\delta_A-1+\epsilon(1-\delta_A)} \]
by applying combinatorial arguments concerning the bounds for sum-products and Cayley graphs. See also \cite{Shk} for further recent developments towards the conjecture.

In this article, we make a short remark in this direction using a dynamical approach based on transfer operator methods.  We introduce an auxiliary probability space $\Sigma_{N,A}(\epsilon)$ with $\epsilon:=\epsilon(N)>0$ such that $\Sigma_{N,A} \subset \Sigma_{N,A}(\epsilon) \subset \Omega_{N,A}$ and yields an averaging result reducing the exponent of asymptotic statistics for classical as well as complex continued fractions with restricted digits.

\subsection{Main results} \label{int:setup}

Our main result for classical continued fractions concerning Conjecture \ref{conj:Z:revisit} is the following.

\begin{theorem} \label{main:overQ}
We have an auxiliary probability space $\Sigma_{N,A}(\epsilon)$ with $\epsilon=\epsilon(N)>0$ such that $\Sigma_{N,A} \subset \Sigma_{N,A}(\epsilon) \subset \Omega_{N,A}$ and 
\[ |\Sigma_{N,A}(\epsilon)| \sim N^{2\delta_A-\eta} \]
for some $0<\eta<1$.
\end{theorem}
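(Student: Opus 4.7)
The plan is to set
\[ \Sigma_{N,A}(\epsilon) := \{a/n \in \Omega_{N,A} : n > (1-\epsilon) N\}, \]
so that the two inclusions $\Sigma_{N,A} \subset \Sigma_{N,A}(\epsilon) \subset \Omega_{N,A}$ hold by construction, and to choose $\epsilon = \epsilon(N) = N^{-\eta}$ at the end. The target cardinality factors as a difference
\[ |\Sigma_{N,A}(\epsilon)| = |\Omega_{N,A}| - \big|\Omega_{\lfloor (1-\epsilon) N \rfloor, A}\big|, \]
which reduces the theorem to upgrading Hensley's asymptotic $|\Omega_{M,A}| \sim C_A M^{2\delta_A}$ to a power-savings form
\[ |\Omega_{M,A}| = C_A M^{2\delta_A} + O(M^{2\delta_A - \gamma}) \]
for some explicit $\gamma > 0$. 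Once this is in hand, a Taylor expansion of $1 - (1-\epsilon)^{2\delta_A}$ combined with $\epsilon = N^{-\eta}$ yields $|\Sigma_{N,A}(\epsilon)| \sim 2\delta_A C_A N^{2\delta_A - \eta}$ for any $\eta \in (0, \gamma)$.

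The main analytic tool is the transfer operator
\[ (\Lc_{A,s} f)(x) = \sum_{a=1}^{A} \frac{1}{(a+x)^{2s}} f\!\left(\frac{1}{a+x}\right) \]
acting on a Hardy-type space of holomorphic functions on a disc containing $[0,1]$. Each $a/n \in \Omega_{M,A}$ is encoded by its CF word $(a_1, \ldots, a_\ell)$ with digits $a_j \le A$, and $n$ coincides with the lower-right entry of $M_{a_1} \cdots M_{a_\ell}$ where $M_a = \twobytwotiny{0}{1}{1}{a}$. The Dirichlet series
\[ \zeta_A(s) := \sum_{\ell \ge 1} \sum_{\substack{a_1,\ldots,a_\ell \le A \\ a_\ell \ge 2}} q_\ell(a_1,\ldots,a_\ell)^{-2s} \]
admits a meromorphic continuation past $\Re s = \delta_A$ expressible via the resolvent $(I - \Lc_{A,s})^{-1}$, with a simple leading pole at $s = \delta_A$ coming from the condition $\lambda_A(\delta_A) = 1$ on the dominant eigenvalue. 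A Mellin--Perron inversion contour shifted past $s = \delta_A$, combined with a Dolgopyat--Baladi--Vall\'ee bound of the form $\|\Lc_{A,\delta_A + it}^{\ell}\| \ll \ell^{-\kappa}$ for $|t| \ge 1$, then extracts the leading term and controls the error, producing the power saving~$\gamma$.

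The hard part will be establishing the Dolgopyat-style vertical-line spectral estimate for $\Lc_{A,s}$ on the restricted alphabet $\{1,\ldots,A\}$. Such bounds are well documented for the full Gauss operator via the uniform non-integrability (UNI) condition; transferring them to the bounded-digit setting requires a genuine verification of UNI for the corresponding subshift of finite type, where the invariant set $E_A$ has empty interior and the boundary of the Cantor set forces care in the Hilbert--Schmidt estimates. A secondary technical point is to ensure that $\zeta_A$ above enumerates each reduced fraction in $\Omega_{M,A}$ exactly once, handled by restricting to words with $a_\ell \ge 2$ via the standard parity involution $[0; a_1, \ldots, a_\ell] = [0; a_1, \ldots, a_\ell - 1, 1]$. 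Once these pieces are in place, the effective asymptotic for $|\Omega_{M,A}|$ passes through the differencing step and the theorem follows.
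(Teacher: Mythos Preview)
Your proposal is correct and follows essentially the same route as the paper: define $\Sigma_{N,A}(\epsilon)$ as the slice of $\Omega_{N,A}$ with denominators in $((1-\epsilon)N,N]$, obtain a power-saving asymptotic $|\Omega_{M,A}| = C_A M^{2\delta_A}(1+O(M^{-\gamma}))$ via the constrained transfer operator, its resolvent identity for the Dirichlet series, a truncated Perron formula, and the Dolgopyat bound, then difference and Taylor-expand with $\epsilon = N^{-\eta}$ for some $\eta<\gamma$ (the paper takes $\eta=\gamma/2$). One remark: your worry that UNI for the finite alphabet $\{1,\ldots,A\}$ is delicate because $E_A$ has empty interior is misplaced---the operator acts on $C^1([0,1])$ (or a Hardy space on a disc containing $[0,1]$), not on functions supported on $E_A$, and the finite-branch case is in fact \emph{simpler} than the full Gauss map, as the paper itself notes.
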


Next we consider an analogue of Theorem \ref{main:overQ} over imaginary quadratic fields based on the work of Kim--Lee--Lim \cite{kim:lee:lim} where they studied dynamics of complex continued fractions and limit theorems for the associated costs. 

We recall that the continued fraction expansion in \eqref{exp:cf} for real numbers is determined by the Gauss map defined as 
\[ T: z \longmapsto \begin{cases}
\frac{1}{x}-\lfloor \frac{1}{x} \rfloor& \hbox{ if } 0<x \leq 1 \\
0& \hbox{ if } x=0
\end{cases} \]
and this can be naturally generalised to complex numbers under the base change over imaginary quadratic fields $\Q(\sqrt{-d})$, $d>0$. This was first introduced by Hurwitz \cite{hurwitz} for $d=1$ then later generalised and studied by Ei--Nakada--Natsui \cite{nakada} for all five norm-Euclidean fields, i.e. for $d \in \{1,2,3,7,11\}$ as follows.

Let $\mathcal{O}_d$ be the ring of integers of $\Q(\sqrt{-d})$, which is a lattics in $\C$:
\[ \mathcal{O}_d =
\begin{cases}
\Z[\sqrt{-d}]& \hbox{ if } d=1,2\\
\Z[\frac{1+\sqrt{-d}}{2}]& \hbox{ if } d =3,7,11.
\end{cases}  \]
This defines a compact fundamental domain $I_d:=\overline{\C / \mathcal{O}_d}$, i.e. set of points whose nearest integer in $\mathcal{O}_d$ are the origin, is either a rectangular or hexagonal polygon due to different $\Z$-module structures of $\mathcal{O}_d$. Then the complex Gauss map $T_d$ on $I_d$ is analogously defined by 
\[ T_d: z \longmapsto \begin{cases}
\frac{1}{z}-\alpha& \hbox{ if } z \neq 0 \\
0& \hbox{ if } z=0
\end{cases} \]
where $\alpha \in \mathcal{O}_d$ is a unique integral element such that $1/z-\alpha \in I_d$. Similarly as in \eqref{exp:cf}, this yields a complex continued fraction expansion for $z=[0;\alpha_1, \alpha_2, \ldots, \alpha_\ell] \in I_d$ and terminates uniquely in a finite time if and only if $z \in \Q(\sqrt{-d})$.

It is then also natural to propose the following version of complex Zaremba conjecture; See also relevant contexts, e.g. \cite{ref1, ref2} for the case $d=1$. For any $z \in \Q(\sqrt{-d})$, we have a reduced form $z=\alpha/\beta$ with relatively prime $\alpha, \beta \in \mathcal{O}_d$. We recall a canonical height function $\mathrm{ht}:  \Q(\sqrt{-d}) \rightarrow \Z_{\geq 0}$ defined by 
\[\mathrm{ht} : z \longmapsto \max\{|\alpha|, |\beta| \}  \]
where $|\cdot|$ denotes the absolute value on $\C$.

\begin{conjecture}[Zaremba over imaginary quadratic fields] \label{conj:Z:quad}
Let $N \in \N$. Then there exist $\alpha, \beta \in \mathcal{O}_d$ with $\mathrm{ht}(\alpha/\beta)^2=N$ such that 
\[ \frac{\alpha}{\beta}=[0; \alpha_1, \alpha_2, \ldots, \alpha_\ell] \]
has all $\alpha_j \in \mathcal{A}_d$ for some bounded set $\mathcal{A}_d \subset \mathcal{O}_d$.
\end{conjecture}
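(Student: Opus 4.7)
The plan is to adapt the Bourgain--Kontorovich machinery \cite{bour:kon} to the imaginary quadratic setting via the complex continued fraction dynamics of Ei--Nakada--Natsui \cite{nakada} and Kim--Lee--Lim \cite{kim:lee:lim}, and then to close the gap between ``positive density'' and ``every $N$'' by exploiting an arithmetic concatenation step available only over $\Oc_d$.

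First I would choose a bounded digit set $\Ac_d \subset \Oc_d$ large enough that the Hausdorff dimension $\delta_{\Ac_d}$ of the complex bounded-type fractal
\[ E_{\Ac_d} = \{ z = [0;\alpha_1,\alpha_2,\ldots] \in I_d : \alpha_j \in \Ac_d \text{ for all } j \} \]
exceeds the complex analogue of the Hensley threshold, namely $\delta_{\Ac_d} > 1$ (reflecting that $E_{\Ac_d}$ sits in the two-real-dimensional domain $I_d$). The dimension is extracted from the pressure equation for the parametrised transfer operator $\Lc_{s,\Ac_d}$ attached to the inverse branches of $T_d$ restricted to digits in $\Ac_d$; existence of such $\Ac_d$ follows from monotonicity of pressure under enlargement of the alphabet, together with the explicit estimates in \cite{kim:lee:lim}.

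Second, I would prove the complex density analogue: for a positive-density subset of $N \in \N$,
\[ \left|\left\{\alpha/\beta \in E_{\Ac_d} : \mathrm{ht}(\alpha/\beta)^2 = N\right\}\right| \gg N^{2\delta_{\Ac_d}-1}. \]
This proceeds by expressing the representation function as a sum of Kloosterman-type sums over $\Oc_d/N\Oc_d$ weighted by iterates of $\Lc_{s,\Ac_d}$, and then running the circle method over $\Q(\sqrt{-d})$: major arcs provide a main term of the expected order via the principal eigenvector of $\Lc_{s,\Ac_d}$, while minor arcs are bounded by combining the spectral gap for $\Lc_{s,\Ac_d}$ with Weil-type bounds for Kloosterman sums over $\Oc_d$. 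The same auxiliary $\ve$-thickening used in Theorem \ref{main:overQ} serves to absorb the loss in the archimedean cutoff.

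The hard part---and the reason this statement remains a conjecture---is upgrading the positive-density result to \emph{every} $N$. A route not available over $\Z$ is to use the product structure of $\Oc_d$: if $N = \mathrm{N}(\mathfrak{n})$ and the ideal $\mathfrak{n}$ factors as $\mathfrak{n}_1 \mathfrak{n}_2$ with both $\mathrm{N}(\mathfrak{n}_i)$ in the positive-density set above, one can concatenate the corresponding Zaremba expansions through the matrix identity $\twobytwo{\alpha}{1}{1}{0}\twobytwo{\beta}{1}{1}{0}=\twobytwo{\alpha\beta+1}{\alpha}{\beta}{1}$, provided every intermediate tail lands in $I_d$. Controlling this geometric constraint inside the hexagonal or rectangular fundamental domain is the single most delicate step, and even if it succeeds one must still handle $N$ whose ideal factorisation in $\Oc_d$ is maximally unbalanced---in particular rational primes inert in $\Oc_d$---which would require either an independent arithmetic input producing such $N$ directly, or a density-increment argument iterating the above scheme with an enlarged alphabet $\Ac_d$.
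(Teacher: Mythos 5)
The statement you are addressing is not a theorem of the paper at all: it is stated as Conjecture \ref{conj:Z:quad} and is left open. What the paper actually proves in this direction is Theorem \ref{main:overK}, an averaged substitute obtained by entirely different (dynamical) means --- constrained transfer operators for the complex Gauss map, the Dolgopyat-type resolvent bound of Proposition \ref{prop:tr:overK}, Perron's formula, and a smoothing step producing the $\epsilon$-thickened space $\Sigma_{N,\Ac}(\epsilon)$. So a correct response here would either have been to recognise the statement as conjectural, or to prove the weaker thickened asymptotics; your proposal does neither, and by your own admission (``the reason this statement remains a conjecture'') it does not close the argument. The two genuinely missing pieces are exactly the ones you flag: the complex Bourgain--Kontorovich step (circle method over $\Q(\sqrt{-d})$ with minor-arc bounds for Kloosterman sums over $\Oc_d$ interacting with the transfer-operator spectral gap) is only sketched and is a substantial open problem in its own right, and the upgrade from positive density to every $N$ has no working mechanism.

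Beyond being incomplete, the concatenation device you propose fails concretely. Continuants do not multiply under concatenation: if $q',q''$ are the denominators of $[0;\alpha_1,\ldots,\alpha_k]$ and $[0;\beta_1,\ldots,\beta_m]$, the denominator of the concatenated expansion is $q'q''+p\,\wt q$-type with a cross term (the full matrix products multiply, not the single-digit matrices in your identity $\twobytwo{\alpha}{1}{1}{0}\twobytwo{\beta}{1}{1}{0}$), so a factorisation $\mathfrak{n}=\mathfrak{n}_1\mathfrak{n}_2$ with $\mathrm{N}(\mathfrak{n}_i)$ in the good set does not produce a fraction of height squared exactly $\mathrm{N}(\mathfrak{n}_1)\mathrm{N}(\mathfrak{n}_2)=N$. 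Moreover, since $\mathrm{ht}(\alpha/\beta)^2=|\beta|^2=\mathrm{N}(\beta)$ for $\alpha/\beta\in I_d$, any $N$ that is not a norm from $\Oc_d$ (for instance a rational prime inert in $\Oc_d$) admits no $\beta$ at all with $|\beta|^2=N$, so no multiplicative or density-increment scheme of the kind you describe can reach those $N$; this is not a technical delicacy about the fundamental domain but an arithmetic obstruction to the strategy as formulated. If you want to contribute within the scope of the paper, the productive target is the proof of Theorem \ref{main:overK}: repeat the Tauberian and smoothing arguments of Sections \ref{sec:taub} and \ref{sec:smooth} with the operator of Proposition \ref{prop:tr:overK} acting on $C^1(\Pc)$, where $\delta_\Ac\in(0,2)$ replaces $\delta_A$ and the exponent loss $\theta$ comes from the same $\epsilon(N)=N^{-\gamma/2}$ thickening.
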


The notion of $E_A, \Sigma_{N,A}$ and $\Omega_{N,A}$ in \eqref{def:bdd}, \eqref{def:Zset:av} and \eqref{def:Zset:av2} can be also extended to complex continued fractions in a canonical way:
\begin{align*} 
E_{\mathcal{A}_d}&=\{ z=[0;\alpha_1,\alpha_2,\ldots] \in I_d: \alpha_j \in \mathcal{A}_d \} \\
\Sigma_{N,\Ac_d}&=\left\{z \in E_{\Ac_d} \cap \Q(\sqrt{-d}) : \mathrm{ht}(z)^2=N \right\}  \\
\Omega_{N,\Ac_d}&=\left\{ z \in E_{\Ac_d} \cap \Q(\sqrt{-d}) : \mathrm{ht}(z)^2 \leq N \right\} .
\end{align*}

Note that Conjecture \ref{conj:Z:quad} can be understood as $|\Sigma_{N,\Ac_d}| \sim N^{2 \delta_{\Ac_d}-2}$ where $\delta_\Ac=\mathrm{dim}_H(E_{\Ac_d})$, since now the set of rationals in $\Q(\sqrt{-d})$ whose height squared equals $N$ is of size $N^2$. We then obtain an analogue of Theorem \ref{main:overQ} for complex continued fractions.

\begin{theorem} \label{main:overK}
Fix $d \in \{1,2,3,7,11\}$ and drop it from the notation. We have an auxiliary probability space $\Sigma_{N,\Ac}(\epsilon)$ with $\epsilon=\epsilon(N)>0$ such that $\Sigma_{N,\Ac} \subset \Sigma_{N,\Ac}(\epsilon) \subset \Omega_{N,\Ac}$ and 
\[ |\Sigma_{N,\Ac}(\epsilon)| \sim N^{2\delta_{\Ac}-\theta} \]
for some $0<\theta<2$.
\end{theorem}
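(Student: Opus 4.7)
The plan is to mirror the proof of Theorem \ref{main:overQ}, transferring the transfer-operator and analytic number theory arguments from the real Gauss map on $[0,1]$ to the complex Gauss map $T_d$ on the fundamental domain $I_d$. Two features of the complex setting drive the modifications: the phase space is two-dimensional, so the natural counting weight is $\mathrm{ht}(z)^2$ rather than the denominator itself, and the transfer operator associated with $T_d$ acts on a space of holomorphic functions on a neighbourhood of $I_d$ rather than on an interval.

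First I would set
\[
\Sigma_{N,\Ac}(\epsilon) = \left\{ z \in \Omega_{N,\Ac} : |z - z'| < \epsilon \text{ for some } z' \in \Sigma_{N,\Ac} \right\},
\]
the $\epsilon$-thickening of $\Sigma_{N,\Ac}$ inside $\Omega_{N,\Ac}$, so that the inclusions $\Sigma_{N,\Ac} \subset \Sigma_{N,\Ac}(\epsilon) \subset \Omega_{N,\Ac}$ are automatic and the counting $|\Sigma_{N,\Ac}(\epsilon)|$ reduces to counting cylinders of $T_d$ of diameter $\asymp \epsilon$ that contain at least one rational with height squared equal to $N$. The cylinder geometry is governed by the shape of $I_d$ (rectangular for $d=1,2$, hexagonal for $d=3,7,11$), but this enters only through a harmless shape constant.

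The main estimate would then go through the complex transfer operator from \cite{kim:lee:lim},
\[
(\Lc_s f)(z) = \sum_{\alpha \in \Ac} \frac{1}{|z + \alpha|^{2s}}\, f\!\left(\frac{1}{z+\alpha}\right),
\]
whose leading eigenvalue equals $1$ precisely at $s = \delta_\Ac$ by Bowen's formula. A spectral gap for $\Lc_{\delta_\Ac}$ yields sharp asymptotics for the generating sums $\sum_{\alpha_1, \ldots, \alpha_k} |\beta_k(\alpha_1, \ldots, \alpha_k)|^{-2s}$ over length-$k$ words in $\Ac$, where $\beta_k$ is the denominator of the $k$-th complex convergent. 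Choosing the truncation depth $k=k(N)$ so that typical denominators satisfy $|\beta_k|^2 \asymp N$, and setting $\epsilon = N^{-\kappa}$ for some $\kappa > 0$ to be optimised as in the proof of Theorem \ref{main:overQ}, I would show that the number of such cylinders meeting $E_{\Ac}$ is of order $\epsilon^{-2\delta_\Ac}$. Combined with the analytic number theory step that spreads the condition $\mathrm{ht}(z)^2 = N$ over a thickened neighbourhood, this produces
\[
|\Sigma_{N,\Ac}(\epsilon)| \sim N^{2\delta_\Ac - \theta}
\]
with $\theta$ determined by $\kappa$ and the spectral gap. The factor of two in the exponent, and the corresponding range $0 < \theta < 2$ in place of $0 < \eta < 1$, simply reflect that a ball of radius $\epsilon$ in $\C$ has area $\asymp \epsilon^2$, whereas its real counterpart had length $\asymp \epsilon$.

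The main obstacle is on the spectral side: unlike the real Gauss map, whose transfer operator enjoys a classical nuclear spectral theory on Hardy spaces of a disc, $\Lc_s$ in the complex setting requires working on a Banach space of holomorphic functions adapted to the hexagonal or rectangular domain $I_d$, and one has to confirm uniform resolvent bounds in a neighbourhood of $s = \delta_\Ac$. The necessary groundwork is carried out in \cite{kim:lee:lim}. The remaining analytic number theory input, namely a Weil-type bound for complex Kloosterman sums over $\mathcal{O}_d$ controlling the congruence conditions coming from $\mathrm{ht}(z)^2 = N$, descends from Deligne's estimate for exponential sums over finite fields via reduction modulo prime ideals of $\mathcal{O}_d$, and should match, up to constants, the real input that drives the proof of Theorem \ref{main:overQ}.
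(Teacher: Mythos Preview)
Your proposal diverges from the paper in a way that creates a genuine gap. The paper's auxiliary space is not a geometric $\epsilon$-ball thickening but an \emph{arithmetic} one,
\[
\Sigma_{N,\Ac}(\varepsilon) = \bigcup_{n=N-\lfloor N\varepsilon(N)\rfloor}^{N} \Sigma_{n,\Ac},
\]
a union over nearby height-squared values. Your metric definition $\{z\in\Omega_{N,\Ac}:|z-z'|<\epsilon\text{ for some }z'\in\Sigma_{N,\Ac}\}$ is empty whenever $\Sigma_{N,\Ac}=\emptyset$, which is precisely the situation Zaremba's conjecture is about and which cannot be excluded a priori; the asymptotic $|\Sigma_{N,\Ac}(\epsilon)|\sim N^{2\delta_\Ac-\theta}$ then fails outright for such $N$. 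The paper's arithmetic smoothing sidesteps this by averaging over a window of heights, and the asymptotic is obtained by differencing the Perron-type estimate for $|\Omega_{N,\Ac}|$ at $N$ and at $N-\lfloor N\varepsilon(N)\rfloor$.

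Second, the analytic number theory input you invoke is not what drives Theorem~\ref{main:overQ}. There are no Kloosterman sums, no Weil or Deligne bounds, and no congruence analysis of $\mathrm{ht}(z)^2=N$ anywhere in the argument. The real case runs entirely through (i) the identity \eqref{id:comp} expressing the Dirichlet series $L(2s,w)$ via the resolvent $(\mathcal{I}-\Lc_{s,w,A})^{-1}$, (ii) the Dolgopyat bound of Proposition~\ref{prop:tr:overQ}(3), (iii) truncated Perron's formula with a contour shift, and (iv) the arithmetic smoothing above. The complex case repeats this verbatim with the operator on $C^1(\Pc)$ (piecewise $C^1$ on the finite Markov partition, not holomorphic functions on a Hardy space) and Proposition~\ref{prop:tr:overK}. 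Your cylinder-counting route would, at minimum, need an independent mechanism to rule out $\Sigma_{N,\Ac}$ being empty or anomalously small for individual $N$, and the Kloosterman input you propose does not appear to supply one.
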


\begin{remark}
We do not present the proof of Theorem \ref{main:overK} separately as it is repeating the identical arguments for the proof of Theorem \ref{main:overQ} in Section \ref{sec:taub} and \ref{sec:smooth}, along with analogous properties of complex continued fractions summarised in Section \ref{subsec:cf:K}; See also  \cite[Section 8]{kim:lee:lim} for relevant details. 

Note that in this case, the Hausdorff dimension $\delta_\Ac$ takes values in 0 and 2, hence the result is slightly weaker than the real case in view of the size of exponent. We presume that there is a possibility to improve the constant $\theta$ with $1<\theta<2$ by taking the scale of $\epsilon(N)$ in a more subtle way.
\end{remark}

\subsection{Overview of the approach}

We note that the proofs of \cite{jen:pol, jen:pol2, hensley} rest on spectral analysis of transfer operators associated to the Gauss map, in particular the compactness of the operator that enables them to understand explicitly the dimension $\delta_A$ in connection with the largest eigenvalue in the spectrum.  

We follow to utilise transfer operators by extending the framework of Baladi--Vallée \cite{bv} for asymptotic Gaussian distribution of the length of continued fractions, where they required more delicate analysis, in particular the Dolgopyat-type uniform polynomial decay of operator norms. This was essential to apply a Tauberian theorem (Perron's formula of order 2) to complex function identified using the resolvent of transfer operator, and accordingly have explicit estimates for the moment generating function of the length as a random variable on the set of rationals with bounded denominator. 

Here we adapt the approach to obtain asymptotic estimates for the size of fractal sets $\Omega_{N,A}$ and $\Omega_{N,\Ac_d}$ closely related to Conjecture \ref{conj:Z} and \ref{conj:Z:quad} by introducing weighted transfer operators, constrained to the sets $A$ and $\Ac_d$. Along with Lee--Sun \cite{lee:sun}, where they made some simplifying remarks on \cite{bv} regarding the use of Perron's formula and smoothing process, we finally construct auxiliary probability spaces in Theorem \ref{main:overQ} and \ref{main:overK}.

\begin{remark}
Our main statement yields an additional evidence for Zaremba's conjecture in terms of the size of fractal set of bounded-type, however, it is in an averaging sense. That means, we could not suggest any statistics for the exact set of rationals with fixed denominator but we obtain the asymptotic formula for the set with $\epsilon$-thickening on the denominator.
 
We would like to emphasise that this observation comes from the interactions with dynamical aspects of continued fractions, in particular as an application of a spectral gap and Dolgopyat estimate for the associated transfer operators. 
\end{remark}

This article is organised as follows. In Section \ref{sec:op}, we introduce constrained transfer operators attached to real and complex continued fractions with restricted digits, and collect their spectral properties. In Section \ref{sec:taub}, we prove Theorem \ref{main:overQ}.(1) applying a Tauberian theorem to convert results on complex functions to asymptotic statistics for Zaremba-type bounded digit sets. In Section \ref{sec:smooth}, we complete the proof of Theorem \ref{main:overQ}.(2), which relies on the probabilistic smoothing process.

\section{Constrained transfer operators} \label{sec:op}

We first introduce and derive spectral properties of constrained transfer operators associated to the Gauss map and complex Gauss map over imaginary quadratic fields. 

\subsection{Classical continued fractions} \label{subsec:cf}

For $a \in \N$, the set $I_a:=\{ x \in [0,1]: \lfloor 1/x \rfloor=a \}$ forms a countable partition of pairwise disjoint subsets such that 
\[ [0,1]=\bigcup_{a \in \N} \overline{I_a} \ \mbox{ and } \  T|_{I_a}:I_a \rightarrow T(I_a) \mbox{ is bijective}. \]
Note that $T(I_a)=[0,1]$ for all $a \in \N$, hence we denote by $\Hc=\{h_a=T|_{I_a}^{-1}: a \in \N \}$ the set of inverse branches of $T$ of the form: $h_a(x)=1/(x+a)$.

We introduce the families of transfer operators. Let $s, w \in \C$. For $A \geq 2$, we denote by $\Hc_A=\{h_a \in \Hc, a \leq A \} \subset \Hc$ the set of inverse branches corresponding to the restricted digit condition.

\begin{definition}
Let $\Lc_{s,w,A}: C^1([0,1]) \rightarrow C^1([0,1])$ be the transfer operator, constrained to finite number of inverse branches, defined by 
\[ \Lc_{s,w,A} f(x)= \sum_{h \in \Hc_A} e^{s \log|h'(x)|+w \cdot 1 \circ h(x)} f\circ h(x),  \]
where 1 denotes the constant function.
\end{definition}

Then we have the following key spectral properties of transfer operator acting on $C^1$-functions, which follow rather straightforwardly from the arguments in \cite[Proposition 0, Eq.(1.8)]{bv}. Equip $C^1([0,1])$ with the norm 
\begin{equation} \label{def:norm}
\|f\|_{(t)}:=\|f\|_\infty+ |t|^{-1} \|f'\|_\infty , \qquad t \in \R^\times, 
\end{equation}
where $\|f\|_\infty=\sup_{x \in [0,1]} |f(x)|$. We will use the same notation for the associated operator norm.

\begin{proposition} \label{prop:tr:overQ}
Set $s=\sigma+it, w=u+iv$. 
\begin{enumerate}
\item The operator $\Lc_{\sigma,u,A}$ on $C^1([0,1])$ is quasi-compact with a unique simple eigenvalue $\lambda_{\sigma,u,A}$ of maximal modulus associated to positive eigenfunction $h_{\sigma,u,A}>0$.
\item The operator $\Lc_{s,w,A}$ depends analytically for $(\sigma,u)\sim (\delta_A, 0)$, accordingly $\lambda_{s,w,A}$ and $h_{s,w,A}$ are well-defined and analytic.
\item The resolvent operator satisfies $\| (\mathcal{I}-\Lc_{s,w,A})^{-1}\|_{(t)} \leq \max\{1, |t|^\xi\}$ uniformly in $w$, with $0<\xi<1/5$ and sufficiently large $|t|$.
\end{enumerate}
\end{proposition}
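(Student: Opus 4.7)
The plan is to establish the three spectral properties in sequence, following the Baladi--Vallée template \cite{bv} but carefully adapted to the restriction of the branch set to the finite family $\Hc_A$.

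For part (1), I would first derive a Lasota--Yorke type inequality of the form
\[
\|\Lc^n_{\sigma,u,A} f\|_{(1)} \le C \rho^n \|f'\|_\infty + C' \|f\|_\infty
\]
with some $\rho < 1$, exploiting the uniform contraction of the inverse branches $h_a(x) = 1/(x+a)$ and the standard bounded distortion estimate $|h_a''/h_a'| \le 2$. Combined with the compact embedding $C^1([0,1]) \hookrightarrow C^0([0,1])$ from Arzelà--Ascoli, Hennion's theorem yields quasi-compactness on $C^1([0,1])$. The kernel of $\Lc_{\sigma,u,A}$ is strictly positive and $\Hc_A$ is a finite, topologically mixing family of contractions, so a Perron--Frobenius argument produces a simple leading eigenvalue $\lambda_{\sigma,u,A}$ with strictly positive eigenfunction $h_{\sigma,u,A}$.

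For part (2), each summand $\exp(s\log|h'(x)| + w\cdot 1\circ h(x))$ is entire in $(s,w)$, and since $\Hc_A$ is a finite sum this analyticity is preserved at the level of the operator, so $(s,w)\mapsto \Lc_{s,w,A}$ is analytic as an operator-valued map on $C^1([0,1])$. Because the leading eigenvalue from (1) is simple and isolated in the spectrum, Kato's analytic perturbation theory gives the existence and analyticity of $\lambda_{s,w,A}$ and $h_{s,w,A}$ on a neighbourhood of the base point $(\delta_A,0)$, where $\lambda_{\delta_A,0,A}=1$ by the Bowen--Ruelle dimension characterisation of $E_A$.

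Part (3) will be the main obstacle: a Dolgopyat-type polynomial bound on the resolvent far from the real axis. My strategy is to show that for $|t|$ sufficiently large there exist $\rho_0<1$ and an integer $N\sim\log|t|$ such that $\|\Lc^N_{s,w,A}\|_{(t)} \le \rho_0^N$; summing the Neumann series $(\mathcal{I}-\Lc_{s,w,A})^{-1}=\sum_{k\ge 0}\Lc^k_{s,w,A}$ then produces the claimed bound $\max\{1,|t|^\xi\}$, with the exponent $\xi<1/5$ emerging from optimising the trade-off between iterate count and decay rate. The core estimate decomposes $\Lc^n_{s,w,A}$ over level-$n$ composed inverse branches $h_{a_1\cdots a_n}$ and extracts oscillatory cancellation from the phases $t\log|h_{a_1\cdots a_n}'(x)|$ through an $L^2$ bound, which is then bootstrapped via the Lasota--Yorke inequality of (1) to the working norm $\|\cdot\|_{(t)}$. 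The two delicate points are, first, that the classical UNI (uniform non-integrability) condition for the Gauss map must survive restriction to $\Hc_A$, which uses the sufficiently rich branch structure available once $A\ge 2$; and, second, that the estimate must be uniform in $w$ on a complex neighbourhood of the origin, which I would obtain by transferring the $w=0$ Dolgopyat estimate via the smooth dependence established in (2).
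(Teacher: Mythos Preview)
Your proposal is correct and follows essentially the same approach as the paper: the paper's own proof is a brief sketch that likewise derives (1) from a Lasota--Yorke inequality built on uniform contraction and bounded distortion, obtains (2) from analytic perturbation theory together with Bowen's pressure characterisation $\lambda_{\delta_A,0,A}=1$, and attributes (3) to the Dolgopyat/UNI machinery of \cite{bv}. Your outline is in fact more detailed than the paper's, naming Hennion's theorem, Kato perturbation theory, and the Neumann-series mechanism explicitly, but the underlying argument is the same.
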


\begin{proof}
For the convenience of the reader, we briefly summarise the key ideas. We refer to \cite{baladi, bv, par:pol} for details. We remark that some arguments could be even simplified due to finiteness of the summand in the definition of constrained transfer operator.

For part (1), the quasi-compactness with existence of $\lambda_{\sigma,u}$ follows from the Lasota--Yorke inequality that is implied by uniform expanding and distortion properties: 
\[ |h'(x)| \ll \rho^n \ \mbox{ and } \ |h''(x)| \ll |h'(x)|, \qquad h \in \Hc^n, n \geq 1, x \in [0,1] \]
with uniform constant $0<\rho<1$.

Part (2) is simply a consequence of analytic perturbation theory. Further for $\Lc_{\sigma,0}$, the dominant eigenvalue $\lambda_{\sigma,0}$ is explicitly given by $e^{P(-\sigma \log|T'|)}$ with a pressure function $P$. Bowen showed that the map $\sigma \mapsto P(-\sigma \log|T'|)$ is convex, strictly decreasing and vanishes exactly at $\sigma=\delta_A=\dim_H(E_A)$. We remark that the Hausdorff dimension takes values between 0 and 1 in this case. 

For part (3), this type of uniform polynomial estimate is called a Dolgopyat estimate; See \cite{dolgopyat, bv}. Though it is now regarded as a standard technique, verifying that this estimate holds is challenging as it needs precise understanding of properties (1) and (2), along with subtle analysis using an additional Uniform Non-Integrability property \cite[Section 3]{bv}.  
\end{proof}

\subsection{Complex continued fractions} \label{subsec:cf:K}

In Kim--Lee--Lim \cite{kim:lee:lim}, they showed that properties of real continued fractions stated in Section \ref{subsec:cf} are analogously generalised to complex continued fractions with suitable modifications as follows. 

Let $(I_d, T_d)$ be the complex Gauss dynamical system as defined in Section \ref{int:setup}. We fix $d \in \{1,2,3,7,11\}$ and drop it from the notation unless otherwise stated. 

For $\a \in \Oc$, let $O_\a:=\{ z \in I : [ 1/z]=\a \}$. We remark that the set $O_\a$ can be empty and $TO_\a \subsetneq I$ for finitely many $\a$, in contrast to the real case. Then non-empty $O_\a$'s form a countable partition for $I$ such that $T|_{O_\a}: O_\a \rightarrow TO_\a$ is bijective. Abusing the notation, we denote by $\Hc_\Ac=\{ h_\a=T|_{O_\a}^{-1} : \a \in \Ac \}$ the set of inverse branches of $T$ corresponding to the digits restricted to some bounded subset $\Ac \subset \Oc$.

Due to the fact that $TO_\a \subsetneq I$ for some $\a$, there are admissibility conditions among digits in the complex continued fraction expansion that make spectral analysis complicated. In particular, one cannot simply use $C^1$-functions on $I$ as one has to consider the function spaces which contain characteristic functions supported on $TO_\a$'s. Based on the work 
\cite{nakada}, we have the technical lemma \cite[Proposition 3.8]{kim:lee:lim}: There is a finite Markov partition $\Pc$ compatible with $T$, i.e. 
\begin{itemize}
\item For each non-empty $O_\a$, $TO_\a$ is a disjoint union of elements in $\Pc$, and
\item For each $h_\a$ and $P \in \Pc$, either there is a unique $Q \in \Pc$ such that $h_\a(P) \subset Q \cap O_\a$ or $h_\a(P) \cap I =\emptyset$.
\end{itemize}

This allows us to consider a direct sum space $C^1(\Pc)=\oplus_{P \in \Pc} C^1(\overline{P})$ of functions $f:I \rightarrow \C$ such that for each $P \in \Pc$, $f|_P$ extends to a continuously differentiable function on an open neighbourhood of $\overline{P}$, equipped with a piecewise $C^1$-norm $\|\cdot\|_{(t)}$ analogously defined as in \eqref{def:norm} by taking the supremum over the finite structure $\Pc$.

\begin{definition}
Let $\Lc_{s,w,\Ac}: C^1(\Pc) \rightarrow C^1(\Pc)$ be the constrained transfer operator defined by 
\[ \Lc_{s,w,\Ac} f(z)= \sum_{h \in \Hc_\Ac} e^{s \log|h'(z)|+w \cdot 1 \circ h(z)} f\circ h(z) \cdot \chi_{\mathrm{Dom}(h)}(z) \]
where 1 denotes the constant function and $\chi_{\mathrm{Dom}(h)}$ denotes a characteristic function supported on the domain of $h$.
\end{definition}

Hence $\Lc_{s,w,\Ac}$ is well-defined and acts boundedly on $C^1(\Pc)$, and further we have the following spectral properties which are analogous to Proposition \ref{prop:tr:overQ} and stated in  \cite[Theorem D]{kim:lee:lim}.

\begin{proposition} \label{prop:tr:overK}
Set $s=\sigma+it, w=u+iv$. 
\begin{enumerate}
\item The operator $\Lc_{\sigma,u,\Ac}$ on $C^1(\Pc)$ is quasi-compact with a unique simple eigenvalue $\lambda_{\sigma,u,\Ac}$ of maximal modulus associated to eigenfunction $h_{\sigma,u,\Ac}$ with $h_{\sigma,u,\Ac}|_P>0$ for all $P \in \Pc$.
\item The operator $\Lc_{s,w,\Ac}$ depends analytically for $(\sigma,u)\sim (\delta_\Ac, 0)$, accordingly $\lambda_{s,w,\Ac}$ and $h_{s,w,\Ac}$ are well-defined and analytic.
\item The resolvent operator satisfies $\| (\mathcal{I}-\Lc_{s,w,\Ac})^{-1}\|_{(t)} \leq \max\{1, |t|^\xi\}$ uniformly in $w$, with $0<\xi<1/10$ and sufficiently large $|t|$.
\end{enumerate}
\end{proposition}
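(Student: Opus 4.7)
The plan is to adapt the proof of Proposition \ref{prop:tr:overQ} to the complex setting, replacing $C^1([0,1])$ by the direct sum space $C^1(\Pc)$ attached to the finite Markov partition $\Pc$, and exploiting the compatibility of $\Hc_\Ac$ with $\Pc$ guaranteed by \cite[Proposition 3.8]{kim:lee:lim} to ensure that $\Lc_{s,w,\Ac}$ leaves $C^1(\Pc)$ invariant. All three parts then mirror the real case, with modifications tailored to the piecewise structure and the two-dimensional geometry of $I$.

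For part (1), I would first verify uniform expansion $|h'(z)|\ll \rho^n$ and bounded distortion $|h''(z)|\ll|h'(z)|$ for $h \in \Hc_\Ac^n$ with a uniform $0<\rho<1$. This is essentially a Koebe-type estimate for compositions of the M\"obius inverse branches $h_\a(z)=1/(z+\a)$ with $\a$ running over the bounded set $\Ac$. Together with the Markov compatibility, these produce a Lasota--Yorke inequality for $\Lc_{\sigma,u,\Ac}$ on $C^1(\Pc)$ and hence quasi-compactness through Ionescu-Tulcea--Marinescu. Simplicity of the maximal eigenvalue $\lambda_{\sigma,u,\Ac}$ and piecewise positivity of $h_{\sigma,u,\Ac}$ then follow from a Perron--Frobenius argument based on the irreducibility of the incidence graph of $(\Pc,\Hc_\Ac)$; finiteness of $\Hc_\Ac$ actually makes every step strictly simpler than the unconstrained case treated in \cite[Theorem D]{kim:lee:lim}.

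Part (2) is then a direct application of Kato's analytic perturbation theory of isolated simple eigenvalues to the holomorphic family $\Lc_{s,w,\Ac}$ of bounded operators on $C^1(\Pc)$. The distinguished parameter $(\sigma,u)=(\delta_\Ac,0)$ is singled out by Bowen's identification of $\delta_\Ac=\dim_H(E_\Ac)$ as the unique zero of the pressure function $\sigma \mapsto \log \lambda_{\sigma,0,\Ac}$, here taking values in $(0,2)$ rather than $(0,1)$, which localises the analyticity window around the relevant point.

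The main obstacle is part (3). The Dolgopyat-type polynomial bound on the resolvent ultimately rests on a Uniform Non-Integrability (UNI) estimate for Birkhoff sums of $\log|T'|$ along cylinders indexed by $\Hc_\Ac$, feeding into an $L^2$ cancellation argument for iterates $\Lc_{s,w,\Ac}^n$ in the oscillatory regime $|t|\to\infty$. I would import the UNI verification from \cite[Section 8]{kim:lee:lim} and restrict it to the finite branch family $\Hc_\Ac$; this is legitimate provided $\Ac$ contains at least two branches whose derivative cocycles are not cohomologous, a mild non-degeneracy assumption that one builds into the definition of $\Ac$. The more restrictive exponent $\xi<1/10$ (versus $\xi<1/5$ over $\Q$) reflects the two-real-dimensional geometry of $I$: the UNI exponent controlling oscillatory integrals on a planar domain is essentially halved relative to the interval analogue, and this loss propagates through the Dolgopyat cancellation argument to halve the admissible resolvent exponent.
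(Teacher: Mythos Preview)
Your sketch follows the same template as the paper's: adapt Proposition \ref{prop:tr:overQ} via uniform expansion and distortion for the complex inverse branches, invoke Lasota--Yorke and Ionescu-Tulcea--Marinescu for quasi-compactness, Kato perturbation for analyticity, and the UNI/Dolgopyat machinery from \cite{kim:lee:lim} for the resolvent bound. The one ingredient the paper singles out and you omit is the cell stratification $\Pc=\bigcup_{i=0}^2 \Pc[i]$ by real dimension and the resulting lower-triangular block form of $\Lc_{s,w,\Ac}$ on $C^1(\Pc)=\bigoplus_i C^1(\Pc[i])$; the paper flags this as ``playing a central role'' in transferring the real-case arguments to a function space that mixes cells of different dimensions. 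Since you already defer the heavy lifting to \cite[Theorem D]{kim:lee:lim}, where this structure is developed, your outline is not wrong---but be aware that the triangular decomposition is the device that makes the Lasota--Yorke and Dolgopyat steps go through on the piecewise space $C^1(\Pc)$ rather than on an honest $C^1$ space, so your appeal to ``Markov compatibility'' and ``irreducibility of the incidence graph'' would need to be routed through it.
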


The proof goes almost the same as in Proposition \ref{prop:tr:overQ} along with uniform expanding and distortion bounds for inverse branches of complex continued fraction maps. We note that the dimension $\delta_\Ac$ takes values between 0 and 2 in this case.

Meanwhile, we remark that there is a cell structure $\Pc=\cup_{i=0}^2 \Pc[i]$ induces a decomposition of the function space $C^1(\Pc)=\bigoplus_{i=0}^2 C^1(\Pc[i])$ and of the operator $\Lc:=\Lc_{s,w,\Ac}$ as a lower-triangular matrix 
\[ \Lc=\begin{bmatrix}
\Lc_{[2]}^{[2]} & 0 & 0 \\
\Lc_{[2]}^{[1]} & \Lc_{[1]}^{[1]} & 0 \\
\Lc_{[2]}^{[0]} & \Lc_{[1]}^{[0]} & \Lc_{[0]}^{[0]} 
\end{bmatrix} \]
where $\Lc^{[i]}_{[j]}: C^1(\Pc[i]) \rightarrow C^1(\Pc[j])$ with $0 \leq i,j \leq 2$ is the component operator. This plays a central role in generalising Proposition \ref{prop:tr:overQ} to the complex case, treating technical issues in a higher dimensional framework.

\section{Complex functions and asymptotic formula} \label{sec:taub}

In this section, we obtain Theorem \ref{main:overQ}.(1). The proof relies on a Tauberian argument, which will allow us to convert analytic properties of the complex function to asymptotic statistics.

We define the two variable series 
\[ L(s,w)= \sum_{n \geq 1} \frac{d_n(w)}{n^s}, \qquad d_n(w):=\sum_{x \in \Sigma_{n,A}} e^{w \cdot 1(x)} \]
for $s,w \in \C$. Note that for all $x=a/n \in \Sigma_{n,A}$, there is a unique $h=h_{a_1} \circ \cdots \cdot h_{a_\ell} \in \Hc_A^\ell$ such that $x=h(0)$ and $n=\mathrm{denom}(x)=|h'(0)|^{1/2}$. Hence we have 
\begin{equation} \label{id:comp}
L(2s,w)=\sum_{h \in \Hc^*} e^{s \log|h'(0)|+w \cdot \sum_{j=1}^\ell 1\circ h_j(0)} = \Lc_{s,w,A}^\sharp \circ \sum_{n \geq 0} \Lc_{s,w,A}^n 1(0),
\end{equation}
where $\Hc^*:=\cup_{n \geq 1} \Hc^n$, and $\Lc_{s,w,A}^\sharp$ denotes the same operator defined by the inverse branches corresponding to $\Hc_A^\sharp:=\{h_a \in \Hc_A: a \geq 2 \} \subset \Hc_A$ for the unique continued fraction expansion for rationals. 

This identity enables us to understand analytic properties of complex series $L(s,w)$ directly follow from Proposition \ref{prop:tr:overQ}: By the implicit function theorem, there is an analytic map $s_0:W \rightarrow \C$ such that for $w \in W$, a complex neighborhood of 0, we have $\lambda_{s_0(w),w,A}=1$. In particular, we have $\lambda_{\delta_A,0,A}=1$ by definition of the pressure function.

\begin{lemma} \label{prop:dirichlet}
For $0<\xi<1/5$, we can find $\a_0$ with the properties that for any $\widehat{\a_0}$ with $0<\widehat{\a_0}<\a_0$ and $w \in W$, we have
\begin{enumerate}
\item $\Re s_0(w)>1-(\a_0-\widehat{\a}_0)$.
\item $L(2s,w)$ has a unique simple pole at $s=s_0(w)$ in the strip $|\Re(s)-\delta_A| \leq \a_0$.
\item $|L(2s,w)| \ll \max\{1, |t|^\xi\}$ in the strip $|\Re(s)-\delta_A| \leq \a_0$, where $t=\Im(s)$.
\end{enumerate}
\end{lemma}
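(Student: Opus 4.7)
The strategy is to transport the spectral information about $\Lc_{s,w,A}$ from Proposition~\ref{prop:tr:overQ} into analytic information about $L(2s,w)$ via the operator identity \eqref{id:comp}, which I would rewrite as
\[ L(2s,w) \;=\; \Lc_{s,w,A}^\sharp\,(\mathcal{I}-\Lc_{s,w,A})^{-1}\,\mathbf{1}\,(0). \]
The Neumann series converges where the spectral radius $\lambda_{\sigma,u,A}$ is strictly less than $1$, i.e.\ for $\sigma>\delta_A$ by strict decrease of the pressure function $\sigma\mapsto P(-\sigma\log|T'|)$ through zero at $\delta_A$. All three claims then reduce to a careful analysis of the right-hand side and its analytic continuation.

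The pole $s_0(w)$ is produced by the implicit function theorem applied to $\lambda_{s,w,A}=1$: joint analyticity of $\lambda_{s,w,A}$ near $(\delta_A,0)$ from Proposition~\ref{prop:tr:overQ}(2), together with $\partial_s\lambda_{s,0,A}|_{s=\delta_A}\neq 0$ (again from strict monotonicity of the pressure), yields an analytic $s_0:W\to\C$ with $s_0(0)=\delta_A$. Part~(1) is then essentially a continuity statement: after shrinking $W$ if needed, $\Re s_0(w)$ stays arbitrarily close to $\delta_A$, which delivers the lower bound once $\alpha_0$ is taken with $\alpha_0>1-\delta_A$ and $\widehat{\alpha}_0$ is allowed to be arbitrarily small. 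For part~(2), I would use the spectral decomposition $\Lc_{s,w,A}=\lambda_{s,w,A}P_{s,w,A}+N_{s,w,A}$ coming from simplicity of the dominant eigenvalue in Proposition~\ref{prop:tr:overQ}(1), which gives
\[ (\mathcal{I}-\Lc_{s,w,A})^{-1} \;=\; \frac{1}{1-\lambda_{s,w,A}}P_{s,w,A} \;+\; (\mathcal{I}-N_{s,w,A})^{-1}(\mathcal{I}-P_{s,w,A}). \]
Choosing $\alpha_0$ small enough that the spectral gap and the absence of further eigenvalues equal to $1$ persist throughout the strip $|\Re s-\delta_A|\le\alpha_0$, the only singularity there is the simple pole at $s=s_0(w)$; nonvanishing of the residue at the evaluation point $0$ follows from positivity of $h_{\delta_A,0,A}$ on $[0,1]$ together with continuity of the projections.

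For part~(3), on the portion of the strip with $|t|=|\Im s|$ large, Proposition~\ref{prop:tr:overQ}(3) gives $\|(\mathcal{I}-\Lc_{s,w,A})^{-1}\|_{(t)}\ll\max\{1,|t|^\xi\}$ uniformly in $w\in W$. The prefactor $\Lc_{s,w,A}^\sharp$ is a finite sum of rescaled composition operators with analytic symbols, hence its $\|\cdot\|_{(t)}$-norm stays uniformly bounded for $\sigma$ in a compact interval around $\delta_A$ and $|t|\ge 1$. Combined with the elementary estimate $|f(0)|\le\|f\|_\infty\le\|f\|_{(t)}$ applied to $f=(\mathcal{I}-\Lc_{s,w,A})^{-1}\mathbf{1}$, this gives the asserted bound for large $|t|$. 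For bounded $|t|$ staying away from $s_0(w)$, the spectral decomposition together with continuity of $P_{s,w,A}$ and $(\mathcal{I}-N_{s,w,A})^{-1}$ delivers a uniform bound, and the two regimes glue.

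The main obstacle is the coordinated choice of $\alpha_0$, $\widehat{\alpha}_0$, and $W$ so that the three regimes — a neighborhood of the pole (implicit function theorem), bounded $|t|$ away from $s_0(w)$ (quasi-compactness and continuity), and $|t|\to\infty$ (Dolgopyat) — can be patched with a single $\xi<1/5$. The substantive ingredient, the Dolgopyat estimate, is already packaged in Proposition~\ref{prop:tr:overQ}(3); the content of this lemma lies in the uniform spectral bookkeeping, which is essentially the template of \cite[Section 2]{bv} with the mild simplification that $\Lc_{s,w,A}$ is a finite sum and therefore entire in $(s,w)$.
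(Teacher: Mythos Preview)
Your proposal is correct and follows exactly the route the paper indicates: the paper does not give a standalone proof of this lemma but states that it ``directly follow[s] from Proposition~\ref{prop:tr:overQ}'' via the identity \eqref{id:comp} and the implicit function theorem, referring the reader to the template of \cite{bv}. Your write-up simply unpacks those same ingredients---resolvent identity, spectral decomposition from quasi-compactness, implicit function theorem for $s_0(w)$, and the Dolgopyat bound for large $|t|$---so there is nothing to compare.
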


This would be sufficient to apply the following version of truncated Perron's formula; See e.g. Titchmarsh \cite[Lemma 3.19]{titch}.

\begin{theorem}[Perron's Formula with error estimates] \label{perron}
Suppose that $a_n$ is a sequence and $A(x)$ is a non-decreasing function such that $|a_n|=O(A(n))$.
Let $F(s)=\sum_{n \geq 1} \frac{a_n}{n^s}$ for $\Re s:=\sigma>\sigma_a$, the abscissa of absolute convergence of $F(s)$. Then for all $D>\sigma_a$ and $T>0$, one has
\begin{align*}
\sum_{n \leq x} a_n= \frac{1}{2\pi i} \int_{D-iT}^{D+iT} F(s) \frac{x^s}{s}ds &+ O\left(\frac{x^D |F|(D)}{T} \right) +O \left(\frac{A(2x)x\log x}{T} \right) \\ &+ O \left( A(x) \mathrm{min} \left\{ \frac{x}{T|x-M |},1 \right\} \right)
\end{align*}
as $T$ tends to infinity, where \[ |F|(\sigma):=\sum_{n \geq 1} \frac{|a_n|}{n^\sigma} \] for $\sigma > \sigma_a$ and $M$ is the nearest integer to $x$.
\end{theorem}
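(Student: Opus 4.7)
The plan is to deduce Perron's formula from the effective Mellin inversion identity
\[ \frac{1}{2\pi i}\int_{D-iT}^{D+iT} \frac{y^s}{s}\,ds = \d(y) + E(y,T,D), \qquad y>0, \]
where $\d(y)=1$ if $y>1$, $\d(1)=1/2$, $\d(y)=0$ if $0<y<1$, and the error satisfies $|E(y,T,D)| \ll y^D/(T|\log y|)$ for $y\neq 1$ together with the uniform fallback bound $|E(y,T,D)| \ll 1$ obtained by estimating $|y^s/s|$ directly on the vertical segment. This discontinuous integral lemma is proved in the usual way: for $y>1$ one closes the contour to the left and picks up the residue at $s=0$, for $0<y<1$ one closes to the right with no residue, and in both cases the horizontal and distant vertical segments are estimated by the geometric decay of $y^s$.

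First I would apply this pointwise with $y=x/n$ for each $n$, multiply by $a_n/(2\pi i)$, and sum over $n\geq 1$. Absolute convergence at $\s=D>\s_a$ justifies exchanging the finite-length contour integral with the infinite sum, giving
\[ \frac{1}{2\pi i}\int_{D-iT}^{D+iT} F(s)\frac{x^s}{s}\,ds \;=\; \sum_{n\leq x} a_n \;-\; \tfrac{1}{2}\mathbf{1}_{x\in\N}\,a_x \;+\; \sum_{n\geq 1} a_n\,E(x/n,T,D). \]
The remaining task is to show that the last sum is absorbed by the three error terms of the statement.

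Next I would partition the range of $n$ by proximity to $x$. For $n$ with $x/n\notin[1/2,2]$ the factor $|\log(x/n)|$ is bounded below, so summing $|a_n|(x/n)^D/(T|\log(x/n)|)$ yields
\[ \ll \frac{x^D}{T}\sum_{n\geq 1}\frac{|a_n|}{n^D} = \frac{x^D|F|(D)}{T}, \]
which is the first error. For $n\in[x/2,2x]$ with $n\neq M$ (the nearest integer to $x$) I would use $|\log(x/n)|\asymp |x-n|/x$, the crude bound $|a_n|\ll A(2x)$, and the harmonic estimate $\sum_{1\leq k\leq x}1/k\ll \log x$ to obtain a contribution bounded by $A(2x)x\log x/T$, the second error. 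The single resonant term $n=M$ is then isolated and bounded by $A(x)\min\{x/(T|x-M|),\,1\}$, using the logarithmic estimate when $M$ is not too close to $x$ and the trivial bound $|E|\ll 1$ when it is; this produces the third error.

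The main obstacle is precisely the regime $n\approx x$, where the logarithm in the denominator of the Mellin error degenerates. The argument hinges on cleanly separating the single integer $n=M$, for which one must fall back on the non-logarithmic fallback bound, from its neighbours $n=M\pm 1,M\pm 2,\dots$, where the gap $|x-n|\geq 1/2$ is enough to revive the harmonic series. Packaging the two regimes into the stated $\min\{x/(T|x-M|),1\}$ form, and verifying that all constants are absolute and uniform in $D$ and $T$, is the bookkeeping that makes the formula effective for the application to Lemma \ref{prop:dirichlet}.
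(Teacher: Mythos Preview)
Your sketch is the standard proof of the truncated Perron formula via the discontinuous integral, and the steps you outline are correct. However, the paper does not give its own proof of this statement: it is quoted as a known result and referenced to Titchmarsh \cite[Lemma 3.19]{titch}, so there is no ``paper's proof'' to compare against. Your argument is precisely the classical one that appears in Titchmarsh and in most analytic number theory texts, so in that sense you are supplying what the paper outsources.
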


We are now ready to obtain the asymptotic formula and conclude Theorem \ref{main:overQ}.(1) as follows. This is essentially the same calculation in \cite[Theorem 3.3]{lee:sun}. 

\begin{theorem} \label{thm:mainest}
For a non-vanishing $B(w)$ and $\gamma>0$, we have 
\[ \sum_{n \leq N} d_n(w) =B(w) N^{2 s_0(w)}(1+O(N^{-\gamma})).\]
In particular, we obtain $|\Omega_{N,A}|=\sum_{n \leq N} d_n(0) \sim N^{2\delta_A}$.
\end{theorem}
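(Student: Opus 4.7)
The plan is to invoke the truncated Perron's formula (Theorem \ref{perron}) applied to the Dirichlet series $L(s,w)$ and then deform the contour leftward past the pole of $L(2s,w)$ at $s=s_0(w)$ supplied by Lemma \ref{prop:dirichlet}(2). Starting at an abscissa $D>2\delta_A$ where absolute convergence holds, one has
\[ \sum_{n \leq N} d_n(w) \;=\; \frac{1}{2\pi i}\int_{D-iT}^{D+iT} L(s,w)\,\frac{N^s}{s}\, ds \;+\; (\mbox{Perron errors}), \]
and the change of variables $s=2u$ rewrites the main integral as a contour integral of $L(2u,w)\,N^{2u}/u$. The contour is then shifted to $\Re u = \delta_A - \a_1$ for some $0<\a_1 < \a_0-\widehat{\a_0}$, kept inside the strip on which Lemma \ref{prop:dirichlet} provides analytic control.

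The shift crosses the unique simple pole at $u=s_0(w)$, producing a residue of the form $B(w)\,N^{2 s_0(w)}$, where
\[ B(w) \;=\; \frac{1}{s_0(w)}\,\mathrm{Res}_{u=s_0(w)} L(2u,w). \]
Using the factorisation \eqref{id:comp} together with the spectral decomposition of the resolvent $(\mathcal{I}-\Lc_{u,w,A})^{-1}$ about the dominant eigenvalue $\l_{u,w,A}=1$, this residue is expressed through the rank-one top eigenprojector $\Pi_{s_0(w),w,A}$ applied to $1(0)$, composed with $\Lc_{s_0(w),w,A}^\sharp$, and divided by $\partial_u \l_{u,w,A}|_{u=s_0(w)}$, which is nonzero by the strict monotonicity of the pressure function recalled in the proof of Proposition \ref{prop:tr:overQ}. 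Non-vanishing of $B(w)$ on a neighbourhood of $0$ then follows from positivity of the eigenfunction $h_{s_0(w),w,A}$, analyticity in $w$, and the fact that $\Lc_{s_0(w),w,A}^\sharp$ acts by a strictly positive finite sum of inverse branches.

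It remains to estimate the horizontal and vertical contour pieces and balance them against the Perron error terms by choosing $T$ optimally. On the vertical piece $\Re u = \delta_A - \a_1$ the Dolgopyat bound from Lemma \ref{prop:dirichlet}(3), $|L(2u,w)| \ll \max\{1,|t|^\xi\}$ with $\xi<1/5$, yields a contribution of order $N^{2\delta_A-2\a_1} T^\xi$; the two horizontal segments yield an analogous bound with an extra factor of $T^{-1}$. Combined with the Perron error terms, bounded using the trivial polynomial estimate for $d_n(w)$ in a small neighbourhood of $w=0$, a choice $T=N^\b$ with suitably small $\b>0$ absorbs all errors into $O(N^{2s_0(w)-\g})$ for some $\g>0$, giving the claimed multiplicative error $(1+O(N^{-\g}))$. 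Specialising to $w=0$ and using $s_0(0)=\delta_A$ delivers the statement for $|\Omega_{N,A}|$.

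The main technical obstacle is that the Dolgopyat exponent $\xi<1/5$ is small and the usable strip width $\a_0-\widehat{\a_0}$ is also small, so there is only a narrow margin to extract a genuine power saving $\g>0$; the arithmetic of exponents when optimising $T$ must be executed with care, but this is precisely the calculation carried out in \cite[Theorem 3.3]{lee:sun}, and it transfers to the present constrained setting without essential change.
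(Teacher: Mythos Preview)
Your proposal is correct and follows essentially the same route as the paper: apply the truncated Perron formula, shift the contour past the unique simple pole $s_0(w)$ of $L(2s,w)$ supplied by Lemma~\ref{prop:dirichlet}, extract the residue $B(w)N^{2s_0(w)}$, and bound the remaining contour and Perron error terms via the Dolgopyat estimate with an optimised choice of $T$; both you and the paper explicitly defer the exponent bookkeeping to \cite[Theorem~3.3]{lee:sun}. Your treatment of the non-vanishing of $B(w)$ via the rank-one eigenprojector and positivity of $h_{s_0(w),w,A}$ is in fact more detailed than what the paper spells out.
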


\begin{proof}
We recall analytic properties of $L(2s,w)$ in Lemma \ref{prop:dirichlet}, which allow us to proceed the contour integration using Cauchy's residue theorem 
\[ \frac{1}{2\pi i} \int_{\mathcal{U}_T(w)} L(2s,w) \frac{N^{2s}}{2s} d(2s)=\frac{E(w)}{s_0(w)} N^{2 s_0(w)}.\]
Here $E(w)$ is the residue of $L(2s,w)$ at the simple pole $s=s_0(w)$ and $\mathcal{U}_T(w)$ is the contour with positive orientation which is a rectangle with the vertices $1+\a_0+iT$, $1-\a_0+iT$, $1-\a_0-iT$, and $1+\a_0-iT$. Applying Perron's formula in Theorem \ref{perron}, we have  
\begin{align*}
\sum_{n \leq N} d_n(w)&= \frac{E(w)}{s_0(w)} N^{2 s_0(w)}+ O\left(\frac{N^{2(1+\a_0)}}{T} \right) +O \left(\frac{A(2N)N\log N}{T} \right) + O(A(N)) \\ &+ O \left( \int_{1-\a_0-iT}^{1-\a_0+iT} |L(2s,w)| \frac{N^{2(1-\a_0)}}{|s|} ds    \right) + O \left( \int_{1-\a_0 \pm iT}^{1+\a_0 \pm iT} |L(2s,w)| \frac{N^{2\Re s}}{T} ds    \right) .
\end{align*} 
The last two error terms are from the contour integral, each of which corresponds to the left vertical line and horizontal lines of the rectangle $\mathcal{U}_T(w)$. We write the expression as 
\[ \sum_{n \leq N} d_n(w)= \frac{E(w)}{s_0(w)} N^{2s_0(w)} ( 1+\mathrm{\rom{1}+ \rom{2}+\rom{3}+\rom{4}+\rom{5}}). \]

Choose $\widehat{\a}_0$ with \[ \frac{32}{79} \a_0 < \widehat{\a}_0 < \a_0 \] and set \[ T= N^{2\a_0+4 \widehat{\a}_0}.\] Notice that $\frac{E(w)}{s_0(w)}$ is bounded in $W$ since we have $s_0(0)=\delta_A$. Then the error terms are bounded as follows:
\begin{itemize}
\item[(\rom{1})] The error term $\mathrm{\rom{1}}$ is equal to $O(N^{2(1-2\widehat{\a}_0-\Re s_0(w))})$. The exponent satisfies \[ 2(1-2\widehat{\a}_0-\Re s_0(w))<2(\a_0-3 \widehat{\a}_0)<0.\]

\item[(\rom{2})] For any $\varepsilon$ with $0<\varepsilon<\frac{\widehat{\a}_0}{2}$, we can choose $W$ in Lemma \ref{prop:dirichlet} small enough to have $k \Re w < \varepsilon/2$ so that $A(N)=O(N^{1+k \Re(w)})=O(N^{1+\varepsilon/2})$ for some $k>0$. Then the exponent of $N$ in the error term $\mathrm{\rom{2}}$ is equal to 
\[ 1-2(\Re s_0(w)+ k \Re w)-(2 \a_0+ 4 \widehat{\a}_0) \leq -2 \a_0+ \frac{5}{2} \widehat{\a}_0 <0. \]

\item[(\rom{3})] Similarly, the error term $\mathrm{\rom{3}}$ is equal to $O(N^{1+k\Re w -2 \Re s_0(w)})$. The exponent satisfies \begin{align*} 1+k \Re w -2 \Re s_0(w) &< -1+2(\a_0+\widehat{\a}_0)+\frac{\varepsilon}{2} \\ &< -1+2\a_0 -\frac{7}{4} \widehat{\a}_0 \leq -\frac{7}{4} \widehat{\a}_0 <0.  \end{align*}

\item[(\rom{4})] For $0<\xi<\frac{1}{5}$, we also have $|L(2s,w)| \ll |t|^{\xi}$. The error term $\mathrm{\rom{4}}$ is $O(N^{2(1-\a_0-\Re s_0(w))} T^\xi)$ and the exponent  is equal to 
\begin{align*} & 2(1-\a_0-\Re s_0(w))+ (2\a_0+4 \widehat{\a}_0)\xi  \\ &< 2(1-\a_0- (1-\a_0+\widehat{\a}_0))+\frac{1}{5}(2\a_0+4 \widehat{\a}_0) \\ &= \frac{2}{5}(\a_0-3 \widehat{\a}_0) <0 .\end{align*}

\item[(\rom{5})] The last term $\mathrm{\rom{5}}$ is $O(T^{\xi-1} \cdot N^{2(1+\a_0-\Re s_0(w))} \log N)$. The exponent satisfies 
\begin{align*} & (2\a_0+4 \widehat{\a}_0)(\xi-1)+2(1+\a_0-\Re s_0(w))+\frac{\varepsilon}{2} \\ &< -\frac{4}{5}(2\a_0+4 \widehat{\a}_0)+ \frac{\widehat{\a}_0}{4}+ 2(2\a_0- \widehat{\a}_0) \\& < \frac{12}{5} \left( \a_0- \frac{99}{48} \widehat{\a}_0 \right)< 0.
\end{align*}
\end{itemize}
By taking \[ \gamma= \mathrm{min} \left(  \frac{7}{4} \widehat{\a}_0, \frac{2}{5}(3\widehat{\a}_0 -\a_0), \frac{12}{5} \left( \frac{99}{48} \widehat{\a}_0 -\a_0 \right)   \right) ,\]
we conclude the theorem.
\end{proof}

\section{Probabilistic smoothing process} \label{sec:smooth}

In this section, we conclude the proof of Theorem \ref{main:overQ}.(2) by adopting the smoothing process and constructing an auxiliary intermediate probability space. 

We define a smoothed probability space $\Sigma_{N,A}(\varepsilon)$ as follows. For $\varepsilon(N)=N^{-\gamma/2}$ and $\gamma>0$ from Theorem \ref{thm:mainest}, consider
\[ \Sigma_{N,A}(\varepsilon) := \bigcup_{n=N-\lfloor N \varepsilon(N) \rfloor}^{N} \Sigma_{n,A}  \]
such that $\Sigma_{N,A} \subset \Sigma_{N,A}(\varepsilon)\subset \Omega_{N,A}$.

We also write $\Psi_w(N)=\sum_{n \leq N} d_n(w)$. Clearly, we have 
\[ \sum_{n=N-\lfloor N \varepsilon(N) \rfloor}^N d_n(w) = \Psi_w(N)-\Psi_w(N-\lfloor N \varepsilon(N) \rfloor) ,\]
and $|\Sigma_{N,A}(\varepsilon)|=\sum_{n=N-\lfloor N \varepsilon(N) \rfloor}^N d_n(0)$. The following smoothing process is similar to the one in \cite[Section 4.2]{bv} and \cite[Section 4]{lee:sun}.

\begin{theorem} \label{thm:mainsm}
With the same setting as in Theorem \ref{thm:mainest}, we have 
\[ \sum_{n=N-\lfloor N \varepsilon(N) \rfloor}^N d_n(w)= 2 \lfloor N \varepsilon(N) \rfloor B(w)\sigma(w)N^{2 s_0(w)-1}(1+O(N^{-\gamma/2})) .\]
Hence we obtain $|\Sigma_{N,A}(\varepsilon)|=\sum_{n=N-\lfloor N \varepsilon(N) \rfloor}^N d_n(0) \sim N^{2\delta_A-\eta}$ for some $0<\eta<1$.
\end{theorem}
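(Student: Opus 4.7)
The plan is to extract the asymptotic for $\sum_{n=N-\lfloor N\varepsilon(N)\rfloor}^{N} d_n(w)$ as a difference of two instances of Theorem~\ref{thm:mainest}. Writing $M:=N-\lfloor N\varepsilon(N) \rfloor$ and $\Psi_w(x):=\sum_{n\le x} d_n(w)$, we have
\[ \sum_{n=M+1}^{N} d_n(w) = \Psi_w(N)-\Psi_w(M) = B(w)\bigl(N^{2s_0(w)}-M^{2s_0(w)}\bigr) + O\bigl(N^{2s_0(w)-\gamma}\bigr), \]
using that $B(w)$ is non-vanishing on $W$ and $\Re s_0(w)$ is close to $\delta_A$. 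The whole task then becomes a Taylor expansion of $M^{2s_0(w)}$ around $N$, together with a careful balance between the Perron error inherited from Theorem~\ref{thm:mainest} and the Taylor remainder.

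First I would write
\[ M = N\Bigl(1-\tfrac{\lfloor N\varepsilon(N)\rfloor}{N}\Bigr), \qquad \tfrac{\lfloor N\varepsilon(N)\rfloor}{N}=\varepsilon(N)+O(1/N), \]
and expand
\[ M^{2s_0(w)} = N^{2s_0(w)}\Bigl(1-2s_0(w)\,\varepsilon(N)+O(\varepsilon(N)^2)+O(1/N)\Bigr). \]
Subtracting, the constant terms cancel and the linear term gives the main contribution
\[ \Psi_w(N)-\Psi_w(M) = 2s_0(w)B(w)\,\varepsilon(N)\,N^{2s_0(w)} + O\bigl(N^{2s_0(w)}\varepsilon(N)^2\bigr) + O\bigl(N^{2s_0(w)-\gamma}\bigr). \]
Since $\varepsilon(N)=N^{-\gamma/2}$, both error terms are $O(N^{2s_0(w)-\gamma})$, which is $N^{-\gamma/2}$ times the main term $\varepsilon(N) N^{2s_0(w)}$. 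Finally, $\varepsilon(N) N = \lfloor N\varepsilon(N)\rfloor (1+O(N^{-1+\gamma/2}))$, and this further discrepancy is also absorbed into the $(1+O(N^{-\gamma/2}))$ factor. Setting $\sigma(w):=s_0(w)$, this yields the displayed formula.

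Specialising at $w=0$, we have $s_0(0)=\delta_A$ and $B(0)\sigma(0)\ne 0$, so
\[ |\Sigma_{N,A}(\varepsilon)| = 2B(0)\delta_A \,\lfloor N\varepsilon(N)\rfloor\,N^{2\delta_A-1}\bigl(1+O(N^{-\gamma/2})\bigr) \asymp N^{2\delta_A-\gamma/2}, \]
giving $\eta=\gamma/2$, which lies in $(0,1)$ because $\gamma$ is chosen small in Theorem~\ref{thm:mainest}.

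The only genuinely delicate point is calibrating the scale of $\varepsilon(N)$. The main term is of order $\varepsilon(N)N^{2s_0(w)}$ whereas the inherited Perron error is $N^{2s_0(w)-\gamma}$; for the smoothed sum to have a clean leading-order asymptotic one needs $\varepsilon(N)\gg N^{-\gamma}$, while to keep $\Sigma_{N,A}(\varepsilon)$ a genuinely intermediate object strictly inside $\Omega_{N,A}$ one needs $\varepsilon(N)\to 0$. The choice $\varepsilon(N)=N^{-\gamma/2}$ is the geometric mean of the two constraints, making the Taylor quadratic remainder $\varepsilon(N)^2=N^{-\gamma}$ match the Perron error exactly and producing the clean relative error $O(N^{-\gamma/2})$. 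Everything else is bookkeeping on the lower-order corrections coming from the floor function and from $1/N$ terms in the expansion of $M/N$.
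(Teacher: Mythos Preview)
Your argument is correct and follows essentially the same route as the paper: both take the difference $\Psi_w(N)-\Psi_w(M)$ of two instances of Theorem~\ref{thm:mainest}, Taylor expand the power $N^{2s_0(w)}$ to first order (the paper phrases this as $F_w(N)-F_w(M)\approx \lfloor N\varepsilon(N)\rfloor F_w'(N)$), and use $\varepsilon(N)=N^{-\gamma/2}$ to balance the Perron error against the quadratic Taylor remainder. Your identification $\sigma(w)=s_0(w)$ and your explanation of the calibration of $\varepsilon(N)$ make explicit points the paper leaves implicit, but the substance is the same.
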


\begin{proof}
For simplicity, we write $F_w(N)=B(w)N^{2s_0(w)}$. Then we have
\begin{align*}
\Psi_w(N)-\Psi_w(N-\lfloor N \varepsilon(N) \rfloor) &= [F_w(N)-F_w(N-\lfloor N \varepsilon(N) \rfloor)] + O(F_w(N) N^{-\gamma}) \\ &= \lfloor N \varepsilon(N) \rfloor F_w'(N) + O(F_w(N) N^{-\gamma}) \\ &= \lfloor N \varepsilon(N) \rfloor F_w'(N) \left[ 1+ O \left( \frac{1}{ \lfloor N \varepsilon(N) \rfloor} \cdot \frac{F_w(N) N^{-\gamma}}{F_w'(N)}  \right) \right] .
\end{align*}
Note that $\frac{F_w(N)}{F_w'(N)}=\frac{N}{2 s_0(w)}$ and $s_0(w)$ is bounded and holomorphic in the neighborhood $W$. Since $\varepsilon(N)=N^{-\gamma/2}$, the last error term is equal to $O(N^{-\gamma/2})$ and this concludes the main part of the proof.

Since we have $\lfloor N \varepsilon(N) \rfloor \sim N^\beta$ for some $0<\beta<1$, we obtain the final estimate by taking $\eta:=1-\beta$.
\end{proof}

\bibliographystyle{alpha}
\bibliography{FracStat}

\end{document}